\documentclass[12pt]{article}

\usepackage[cp1251]{inputenc}   
\usepackage[english]{babel}
\usepackage{amsmath,amsfonts,amssymb, amsthm}

\usepackage{cite}

\theoremstyle{plain}
\newtheorem{theorem}{Theorem}[section]
\newtheorem{lemma}{Lemma}[section]
\newtheorem{corollary}{Corrollary}[theorem]
\theoremstyle{definition}

\newtheorem{example}{Example}[section]

\begin{document}

\author{I.\,L.\,Sokhor}

\title{The continuation of finite $E_\mathfrak{F}$-groups theory}

\date{}

\maketitle

{\footnotesize

\textbf{Abstract.} 
We describe the structure of finite groups with
$\mathfrak{F}$\nobreakdash-\hspace{0pt}subnormal or
self-normalizing primary cyclic subgroups
when $\mathfrak{F}$ is a subgroup-closed saturate superradical formation
containing all nilpotent groups.
We prove that groups with absolutely
$\mathfrak{F}$\nobreakdash-\hspace{0pt}subnormal or
self-normalizing primary cyclic subgroups are soluble
when $\mathfrak{F}$ is a subgroup-closed saturate formation
containing all nilpotent groups.

\textbf{Keywords:} finite groups, primary cyclic subgroups,
subnormal subgroups, abnormal subgroups, derived subgroup.

{\bf MSC:} 20D10; 20D35\,.

}

\section{Introduction}

All groups in this paper are finite.
We use the standard notations and terminology of~\cite{Hup}.

Let $\mathfrak F$ be a formation and let $G$ be a group.

A subgroup~$H$ of $G$ is $\mathfrak F$\nobreakdash-\hspace{0pt}subnormal in $G$
if $G=H$ or there is a chain of subgroups
\[H=H_0\lessdot \  H_1\lessdot \  \ldots \lessdot  H_n=G    \]
such that $H_i/({H_{i-1}})_{H_i}\in \mathfrak F$ for all~$i$.
(or, equivalently, $H_i^\mathfrak F\le H_{i-1}$).
Here we write $A\lessdot B$ if $A$ is a maximal subgroup of a group~$B$,
and $A_B=\bigcap _{b\in B}A^b$ is the core of $A$ in~$B$,

A subgroup~$H$ of~$G$ is $\mathfrak F$\nobreakdash-\hspace{0pt}abnormal in $G$
if $L/K_L\not \in \mathfrak F$ for all $K$ and $L$ such that $H\le K\lessdot L\le G$.

A group $G$ is said to be an $E_\mathfrak{F}$-group if $G\notin\mathfrak{F}$
and its every non-trivial subgroup is $\mathfrak{F}$-subnormal or
$\mathfrak{F}$-abnormal in $G$.
The structure of $E_\mathfrak{F}$-groups for various formations $\mathfrak{F}$
is investigated by many authors, see the review paper of A.\,N.\,Skiba~\cite{Skiba2016}.

It is evident that in a group any proper subgroup
could not be $\mathfrak{F}$-subnormal and $\mathfrak{F}$-abnormal
at the same time, i.\,e. this notations are alternative.
If $\mathfrak{F}$ is a subgroup-closed formation containing
all nilpotent subgroups, then every $\mathfrak{F}$\nobreakdash-\hspace{0pt}abnormal
subgroup is self-normalizing,
i.\,e. it coincides with own normalizer.
But self-normalizingness and
$\mathfrak{F}$-subnormality are not alternative notions.
For example, every non-normal subgroup of prime index
in a soluble group is self-normalizing and $\mathfrak{U}$-subnormal.
Here $\mathfrak{U}$ is the formation of all supersoluble groups.

Groups with $\mathfrak F$\nobreakdash-\hspace{0pt}subnormal
or self-normalizing subgroups were studied in~\cite{Mon2016,MonS2017}.
In particular, V.\,S.\,Monakhov~\cite{Mon2016} showed that
the class of groups with $\mathfrak{U}$-subnormal
or self-normalizing primary subgroups is much wider than
the class of $E_\mathfrak{U}$-groups.

In this paper, we continue the research in noted theme.
We describe the structure of groups with
$\mathfrak{F}$-subnormal or self-normalizing
primary cyclic subgroups
when $\mathfrak{F}$ is a subgroup-closed saturate superradical formation
containing all nilpotent groups.
In addition, we prove that groups with absolutely
$\mathfrak{F}$\nobreakdash-\hspace{0pt}subnormal or
self-normalizing primary cyclic subgroups are soluble
when $\mathfrak{F}$ is a subgroup-closed saturate formation
containing all nilpotent groups.

\section{Preliminaries}

Let $G$ be a group. We denote the set of all prime devisors of $|G|$ by $\pi(G)$,
$A\rtimes B$ denotes the semidirect product of a normal subgroup $A$ and a subgroup~$B$.

The formations of all abelian and nilpotent groups are denoted by
$\mathfrak{A}$ and $\mathfrak{N}$, respectively.

A normal subgroup-closed formation $\mathfrak{F}$ is called
superradical if  any group $G=AB$ belongs to $\mathfrak{F}$
whenever  $A$ and $B$ are $\mathfrak{F}$\nobreakdash-\hspace{0pt}subnormal
$\mathfrak{F}$\nobreakdash-\hspace{0pt}subgroups of $G$.
It is known that formations with Shemetkov property and lattice formations
are superradical.

Let $\mathfrak F$ be a formation and $G$ be a group.
The intersection of all normal subgroups of~$G$
with quotient in $\mathfrak F$ is called
$\mathfrak F$\nobreakdash-\hspace{0pt}residual and
is denoted by $G^\mathfrak F$.

We need the following properties of  $\mathfrak{F}$-subnormal
and $\mathfrak{F}$-abnormal subgroups.

\begin{lemma}\label{lem_Fsn}
Let $\mathfrak{F}$ be a formation,
let $H$ and $K$ be subgroups of a group~$G$,
and let $N$ be a normal subgroup of $G$.
The following statements hold.

$(1)$~If $K$ is $\mathfrak{F}$\nobreakdash-\hspace{0pt}subnormal
    in $H$ and $H$ is $\mathfrak{F}$\nobreakdash-\hspace{0pt}subnormal in~$G$,
    then $K$ is $\mathfrak{F}$\nobreakdash-\hspace{0pt}subnormal in~$G$
    \textup{\cite[6.1.6\,(1)]{BalCl}}.

$(2)$~If $K/N$ is $\mathfrak{F}$\nobreakdash-\hspace{0pt}subnormal in $G/N$,
    then  $K$ is $\mathfrak{F}$\nobreakdash-\hspace{0pt}subnormal in~$G$
    \textup{\cite[6.1.6\,(2)]{BalCl}}.

$(3)$~If $H$ is $\mathfrak{F}$\nobreakdash-\hspace{0pt}subnormal in~$G$,
    then $HN/N$ is $\mathfrak{F}$\nobreakdash-\hspace{0pt}subnormal in~$G/N$
    \textup{\cite[6.1.6\,(3)]{BalCl}}.

$(4)$~If $\mathfrak{F}$ is a subgroup-closed formation and $G^{\mathfrak{F}}\leq H$,
    then $H$ is $\mathfrak{F}$\nobreakdash-\hspace{0pt}subnormal in~$G$
    \textup{\cite[6.1.7\,(1)]{BalCl}}.

$(5)$~If $\mathfrak{F}$ is a subgroup-closed formation and
    $H$ is $\mathfrak{F}$\nobreakdash-\hspace{0pt}subnormal in~$G$,
    then $H\cap K$ is $\mathfrak{F}$\nobreakdash-\hspace{0pt}subnormal in~$K$,
    \textup{\cite[6.1.7\,(2)]{BalCl}};

$(6)$~If $\mathfrak{F}$ is a subgroup-closed formation, $K\leq H$,
    $H$ is $\mathfrak{F}$\nobreakdash-\hspace{0pt}subnormal in~$G$
    and $H\in\mathfrak{F}$, then $K$ is
    $\mathfrak{F}$\nobreakdash-\hspace{0pt}subnormal in~$G$.
\end{lemma}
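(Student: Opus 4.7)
The plan is to reduce the statement to two previous parts of the same lemma, namely parts (4) and (1), which together almost trivialize the claim once one spots the reduction. First I would note that from $H\in\mathfrak{F}$ one obtains $H^{\mathfrak{F}}=1$, since the trivial subgroup of $H$ is normal with quotient $H\in\mathfrak{F}$, so it already occurs in the intersection defining the residual.

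Next I would apply part (4) of the lemma inside the group $H$ rather than $G$: because $\mathfrak{F}$ is subgroup-closed and $H^{\mathfrak{F}}=1\leq K$, part (4) gives that $K$ is $\mathfrak{F}$-subnormal in~$H$. Finally, since $H$ is by hypothesis $\mathfrak{F}$-subnormal in~$G$, the transitivity statement in part (1) immediately yields that $K$ is $\mathfrak{F}$-subnormal in~$G$.

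There is really no obstacle here; the content of the statement is just the observation that when the intermediate subgroup $H$ is itself in $\mathfrak{F}$, its $\mathfrak{F}$-residual is trivial, so every subgroup of $H$ is automatically $\mathfrak{F}$-subnormal in $H$ and the chain from $K$ up to $G$ is obtained by concatenating the chain inside $H$ with the given chain from $H$ to $G$. One could equivalently exhibit a chain explicitly: take a maximal chain $K=K_0\lessdot K_1\lessdot\ldots\lessdot K_m=H$ inside~$H$; since every quotient $K_i/(K_{i-1})_{K_i}$ is a section of $H\in\mathfrak{F}$ and $\mathfrak{F}$ is subgroup-closed, each such quotient lies in $\mathfrak{F}$. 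Appending this to the given $\mathfrak{F}$-subnormal chain from $H$ to $G$ produces an $\mathfrak{F}$-subnormal chain from $K$ to $G$, which is the desired conclusion.
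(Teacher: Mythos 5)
Your proof of part $(6)$ is correct: since $H\in\mathfrak{F}$ forces $H^{\mathfrak{F}}=1\leq K$, part $(4)$ applied inside $H$ shows that $K$ is $\mathfrak{F}$-subnormal in $H$, and the transitivity in part $(1)$ then transfers this to $G$; the explicit-chain variant you sketch is an equally valid unpacking of the same idea. The paper gives no proof of this lemma at all --- parts $(1)$--$(5)$ are quoted from the literature and part $(6)$ is stated without argument --- so your reduction supplies exactly the missing standard argument and is surely what the author intended.
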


\begin{lemma}[{\cite[Lemma 1.4]{Mon2016}}]\label{lem_Fabn}
Let $\mathfrak F$ be a subgroup-closed formation containing groups
of order~$p$ for all~$p\in \mathbb{P}$, and let $A$ be an
$\mathfrak F$\nobreakdash-\hspace{0pt}abnormal subgroup of a group~$G$.
The following statements hold.

    $(1)$ If $A\le B\le G$, then $B$ is
    $\mathfrak F$\nobreakdash-\hspace{0pt}abnormal in $G$ and $B=N_G(B)$;

    $(2)$ If $G$ is soluble, then $A$ is abnormal in $G$.
\end{lemma}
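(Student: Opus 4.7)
Part~(1) should drop out of the definitions. Any chain
$B\le K\lessdot L\le G$ automatically has $A\le K$, so the
$\mathfrak{F}$-abnormality of $A$ forces $L/K_L\notin\mathfrak{F}$ and
gives the first claim. For the self-normalization assertion I would argue
by contradiction: if $B<N_G(B)$, pick $B\lessdot M\le N_G(B)$; then
$B\trianglelefteq M$ yields $M/B_M=M/B$ of prime order, hence in
$\mathfrak{F}$ by hypothesis, contradicting $\mathfrak{F}$-abnormality of
$A$ along $A\le B\lessdot M$.

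Part~(2) I would prove by induction on $|G|$. For a fixed $g\in G$, the
subgroup $\langle A,g\rangle$ inherits $\mathfrak{F}$-abnormality of $A$,
so if $\langle A,g\rangle<G$ the induction hypothesis applied there
settles the instance at~$g$; thus one may assume $G=\langle A,g\rangle$.
Pick a minimal normal subgroup $N$ of $G$, which is elementary abelian by
solubility. A routine verification using $(K/N)_{L/N}=K_L/N$ shows that
$\mathfrak{F}$-abnormality descends to the quotient $G/N$ for any
subgroup containing~$N$; so if $N\le A$, the induction hypothesis gives
$A/N$ abnormal in $G/N$, and this standardly lifts to $A$ abnormal
in~$G$.

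The substantive case is $N\not\le A$. Then $AN\supsetneq A$ and, by the
same quotient observation, $AN/N$ is $\mathfrak{F}$-abnormal and therefore
abnormal in $G/N$ by induction. Setting $H:=\langle A,A^g\rangle$ this
yields $g\in HN$; write $g=hn$ with $h\in H$, $n\in N$. Since
$G=\langle A,g\rangle\le HN\le G$, we have $G=HN$. The intersection
$M:=H\cap N$ is normalized by $H$ and (since $N$ is abelian) by $N$,
hence by all of~$G$; minimality of $N$ forces $M\in\{1,N\}$. If $M=N$
then $n\in N\le H$ and $g=hn\in H$.

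The hard sub-case is $M=1$, so that $G=H\ltimes N$. Here I would exploit
the commutator identity $[a,g]=[a,h]\cdot[a^h,n]$, valid for every
$a\in A$: its left-hand side lies in $H$ because both $a$ and $a^g$ do,
while $[a,h]\in H$ and $[a^h,n]\in N$, so uniqueness of the $H\ltimes N$
decomposition forces $[a^h,n]=1$ for every $a\in A$. Equivalently, $A$
centralizes $n':=hnh^{-1}\in N$, so $n'\in C_G(A)\le N_G(A)=A$ by
part~(1), whence $n'\in A\cap N\le H\cap N=1$; therefore $n=1$ and
$g=h\in H$. The main obstacle will be precisely this final step: the
naive chain ``$A$ abnormal in $AN$, $AN$ abnormal in $G$'' does not in
general imply $A$ abnormal in $G$, so the commutator bookkeeping above
seems unavoidable.
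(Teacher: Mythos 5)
Your argument is correct, but note that the paper itself offers no proof of this lemma: it is imported verbatim as \cite[Lemma~1.4]{Mon2016}, so the only comparison available is with the standard argument. Your part~(1) is exactly that standard argument (monotonicity of the defining condition, plus a minimal overgroup $B\lessdot M\le N_G(B)$ giving $M/B_M$ of prime order, hence in $\mathfrak F$ since $\mathfrak F$ contains all groups of prime order). For part~(2) the usual route is much shorter: take any maximal chain $A=A_0\lessdot A_1\lessdot\dots\lessdot A_n=G$; each $A_i/(A_{i-1})_{A_i}$ lies outside $\mathfrak F$, hence is not of prime order, so (by solubility) $A_{i-1}$ is not normal in $A_i$; a non-normal maximal subgroup is self-normalizing and therefore abnormal, and transitivity of abnormality finishes the proof. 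Your induction on $|G|$ with the reduction to $G=\langle A,g\rangle$, the dichotomy $H\cap N\in\{1,N\}$, and the commutator identity $[a,hn]=[a,h][a^h,n]$ forcing $[a^h,n]\in H\cap N=1$ and then $hnh^{-1}\in C_G(A)\cap N\le N_G(A)\cap N=A\cap N=1$ is a correct, self-contained re-derivation that avoids quoting transitivity of abnormality; every step checks out, including the quotient transfer via $(K/N)_{L/N}=K_L/N$ and the lifting in the case $N\le A$. One inaccuracy in your closing remark: abnormality \emph{is} transitive, so the chain ``$A$ abnormal in $AN$, $AN$ abnormal in $G$'' would imply $A$ abnormal in $G$; the genuine obstacle is only that your induction cannot certify ``$A$ abnormal in $AN$'' when $AN=G$ (in that case one can instead observe that $G=AN$ with $A\cap N=1$ forces, via Dedekind and minimality of $N$, that $A$ is a self-normalizing maximal subgroup, hence abnormal). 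This does not affect the validity of what you actually prove.
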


A subgroup $H$ of a group $G$ is abnormal
if $x \in \langle  H,H^x \rangle$for any $x \in G$.
An abnormal subgroup is self-normalizing.

\begin{lemma}\label{lem_abn}
Let $G$ be a group. The following statements hold.

$(1)$ If $P$ is a Sylow subgroup of $G$,
then $N_G(P)$ is abnormal in $G$.

$(2)$ If $A$ is an abnormal subgroup of $G$ and $A\le  B\le G$,
then $B$ is abnormal in $G$ and $N_G(B)=B$.

$(3)$ If $A$ is an abnormal subgroup of $G$
and $N$ is a normal subgroup in $G$,
then $AN/N$ is abnormal in $G/N$.
\end{lemma}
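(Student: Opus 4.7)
The plan is to prove the three parts independently, each by a short direct argument from the definition of abnormality: a subgroup $H$ of $G$ is abnormal if $x\in\langle H,H^x\rangle$ for every $x\in G$.

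For part $(1)$ the strategy is a Frattini argument. Set $H=N_G(P)$ and, given $x\in G$, let $K=\langle H,H^x\rangle$. Then $P\le H\le K$ and $P^x\le H^x\le K$, and both $P$ and $P^x$ are Sylow $p$-subgroups of $K$ (they have the same order and are $p$-subgroups of $K$). By Sylow's theorem applied inside $K$, there is $k\in K$ with $P^x=P^k$, hence $xk^{-1}\in N_G(P)=H\le K$, which yields $x\in K$. Thus $N_G(P)$ is abnormal.

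For part $(2)$, let $x\in G$. From $A\le B$ we get $A^x\le B^x$, so $\langle A,A^x\rangle\le\langle B,B^x\rangle$; abnormality of $A$ gives $x\in\langle A,A^x\rangle$, whence $x\in\langle B,B^x\rangle$, proving $B$ is abnormal. For the equality $N_G(B)=B$, take $g\in N_G(B)$; then $B^g=B$, so $\langle B,B^g\rangle=B$, and abnormality of $B$ forces $g\in B$.

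For part $(3)$, work modulo $N$ and exploit that $N$ is normal. Take $\bar{x}=xN\in G/N$. Since $A$ is abnormal in $G$, $x\in\langle A,A^x\rangle$, and therefore
\[
\bar{x}=xN\in\langle A,A^x\rangle N/N=\langle AN/N,\,A^xN/N\rangle=\langle AN/N,\,(AN/N)^{\bar{x}}\rangle,
\]
where the last equality uses $(AN)^x=A^xN$ because $N$ is normal. This is exactly the abnormality of $AN/N$ in $G/N$.

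None of the three parts presents a real obstacle, as they are standard consequences of the definition; the only point that requires a little care is the identification $(AN/N)^{\bar{x}}=A^xN/N$ in part $(3)$, which rests on the normality of $N$, and the verification in part $(1)$ that $P$ and $P^x$ are Sylow subgroups of the generated subgroup $K$ rather than merely of $G$.
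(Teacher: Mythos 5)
Your proof is correct in all three parts: the Frattini/Sylow argument for $(1)$, the direct monotonicity argument $\langle A,A^x\rangle\le\langle B,B^x\rangle$ for $(2)$, and the identification $\langle A,A^x\rangle N/N=\langle AN/N,(AN/N)^{\bar{x}}\rangle$ for $(3)$ are exactly the standard arguments, and you correctly flag the two points needing care ($P$, $P^x$ being Sylow in $K$, and $(AN)^x=A^xN$ via normality of $N$). The paper itself states this lemma without proof, treating these as well-known facts about abnormal subgroups, so there is no divergence to report.
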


A Carter subgroup is a nilpotent self-normalizing subgroup~\cite[VI.12]{Hup}.
In soluble groups, Carter subgroups exist and are conjugate.
An insoluble group can have no Carter subgroups,
but if they exist, then they are conjugate~\cite{Vd}.

A group $G$ is a minimal non-$\mathfrak{F}$-group
if $G\notin\mathfrak{F}$ but every proper subgroup of $G$
belongs to $\mathfrak{F}$.
Minimal non-$\mathfrak{N}$\nobreakdash-\hspace{0pt}groups
are also called Schmidt groups, their property are well known~\cite{Mon_Sch}.

\begin{lemma}\label{lem_Fsn_max}
Let $\mathfrak{F}$ be a subgroup-closed saturate formation.
If every maximal subgroup of a group $G$ is $\mathfrak{F}$-subnormal,
then $G\in\mathfrak{F}$.
\end{lemma}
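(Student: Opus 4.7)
The plan is to argue by a minimal counterexample, exploiting saturation of $\mathfrak{F}$ and the fact that any $\mathfrak{F}$-subnormal maximal subgroup $M$ of $G$ forces $G/M_G\in\mathfrak{F}$ (the defining chain must have length one by maximality).

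Suppose the lemma fails and let $G$ be a counterexample of minimal order. I first check that the hypothesis descends to every proper quotient $G/N$: if $\bar M = M/N$ is maximal in $G/N$, then $M$ is maximal in $G$, hence $\mathfrak{F}$-subnormal in $G$, so $G/M_G \in \mathfrak{F}$; since $N\le M$ gives $(M/N)_{G/N}=M_G/N$, we obtain $(G/N)/(M/N)_{G/N}\cong G/M_G\in\mathfrak{F}$, so $\bar M$ is $\mathfrak{F}$-subnormal in $G/N$. By minimality, $G/N\in\mathfrak{F}$ for every non-trivial normal $N$.

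Next I show $G$ has a unique minimal normal subgroup. If $N_1\ne N_2$ are two of them, then $N_1\cap N_2=1$, so $G$ embeds in $G/N_1\times G/N_2$; $R_0$-closure of the formation $\mathfrak{F}$ then yields $G\cong G/(N_1\cap N_2)\in\mathfrak{F}$, a contradiction. Let $N$ denote the unique minimal normal subgroup. If $N\le\Phi(G)$, then $G/\Phi(G)$ is a quotient of $G/N\in\mathfrak{F}$, hence lies in $\mathfrak{F}$, and saturation forces $G\in\mathfrak{F}$, contradiction.

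Therefore $N\not\le\Phi(G)$, so some maximal subgroup $M$ of $G$ does not contain $N$. Its core $M_G$ is normal in $G$, and if $M_G\ne 1$ it would contain some minimal normal subgroup of $G$, which must be $N$ by uniqueness; but then $N\le M_G\le M$, contrary to the choice of $M$. Hence $M_G=1$. Since $M$ is $\mathfrak{F}$-subnormal and maximal in $G$, we conclude $G=G/M_G\in\mathfrak{F}$, the final contradiction. The argument is short and the only non-trivial ingredients are the reduction to a single minimal normal subgroup via $R_0$-closure of a formation and the application of saturation to eliminate the Frattini case; subgroup-closedness is not actually invoked beyond what is already built into the ambient setting.
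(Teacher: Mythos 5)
Your proof is correct, but it takes a more roundabout route than the paper's. The paper argues directly: since each maximal subgroup $M$ is $\mathfrak{F}$-subnormal and maximal, the defining chain has length one and $G/M_G\in\mathfrak{F}$; because the set of maximal subgroups is conjugation-invariant, $\bigcap_M M_G=\bigcap_M M=\Phi(G)$, so $R_0$-closure of the formation gives $G/\Phi(G)\in\mathfrak{F}$ in a single step, and saturation finishes. You reach the same two pillars (the length-one chain giving $G/M_G\in\mathfrak{F}$, and saturation applied at $\Phi(G)$), but you package them inside a minimal-counterexample induction with a reduction to a unique minimal normal subgroup $N$, splitting into the cases $N\le\Phi(G)$ (handled by saturation) and $N\not\le\Phi(G)$ (handled by producing a core-free maximal subgroup, which immediately yields $G\cong G/M_G\in\mathfrak{F}$ without saturation). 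Your version is longer but self-contained and makes explicit where each closure property is used; the paper's version buys brevity by intersecting all the cores at once instead of inducting. Your closing observation that subgroup-closedness is never needed is accurate and applies equally to the paper's argument. One small point worth flagging in either treatment: the step ``$M$ maximal and $\mathfrak{F}$-subnormal implies $G/M_G\in\mathfrak{F}$'' deserves the one-line justification you give in your plan (any $\mathfrak{F}$-subnormal chain starting at a maximal subgroup must terminate immediately), since the paper states it without comment.
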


\begin{proof}
If $M$ is a maximal subgroup of $G$,
then $G/M_G\in\mathfrak{F}$. Hence $G/\bigcap M_G=G/\Phi(G)\in\mathfrak{F}$
and $G\in\mathfrak{F}$.
\end{proof}

\section{Groups with $\mathfrak{F}$-subnormal or self-normalizing primary subgroups}

\begin{lemma}\label{lem_inF}
Let  $\mathfrak{F}$ be a subgroup-closed saturate superradical formation
containing all nilpotent groups. A soluble group $G$ belongs to
$\mathfrak{F}$ if and only if every primary cyclic subgroup of $G$ is
$\mathfrak{F}$\nobreakdash-\hspace{0pt}subnormal.
\end{lemma}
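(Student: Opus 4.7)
The easy direction is immediate: if $G\in\mathfrak{F}$ then $G^{\mathfrak{F}}=1$ is contained in every subgroup, so Lemma~\ref{lem_Fsn}(4) makes every subgroup $\mathfrak{F}$-subnormal. For the nontrivial direction the plan is induction on $|G|$ with a minimal counterexample~$G$: soluble, with every primary cyclic subgroup $\mathfrak{F}$-subnormal, but $G\notin\mathfrak{F}$.

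The first task is to check that the hypothesis descends to proper sections. For a subgroup $K\le G$, every primary cyclic subgroup of $K$ is primary cyclic in $G$, and Lemma~\ref{lem_Fsn}(5) transfers its $\mathfrak{F}$-subnormality to~$K$. For a proper quotient $G/N$, every cyclic $r$-subgroup $L/N$ can be written as $\langle x\rangle N/N$ with $x$ a primary $r$-element of $G$: take the $r$-part of any preimage of a generator, whose $r'$-part must lie in $N$ since $L/N$ is an $r$-group; then Lemma~\ref{lem_Fsn}(3) transfers $\mathfrak{F}$-subnormality of $\langle x\rangle$ to $L/N$. By minimality, every proper subgroup and every proper quotient of $G$ lies in $\mathfrak{F}$. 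If $G$ had two distinct minimal normal subgroups $N_1,N_2$, the closure of the formation $\mathfrak{F}$ under subdirect products (with $N_1\cap N_2=1$) would place $G$ in $\mathfrak{F}$, so $G$ has a unique minimal normal subgroup $N$, elementary abelian of some prime $p$, and $N=G^{\mathfrak{F}}$.

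Since $\mathfrak{N}\subseteq\mathfrak{F}$ forbids $G$ from being a $p$-group, pick a prime $q\neq p$ dividing $|G|$, a Sylow $q$-subgroup $Q$, and a Hall $q'$-subgroup $H_q$ (which necessarily contains $N$). If $G/N$ is not a $q$-group, then $NQ<G$; both $NQ$ and $H_q$ are proper subgroups containing $G^{\mathfrak{F}}=N$, so each is $\mathfrak{F}$-subnormal by Lemma~\ref{lem_Fsn}(4) and lies in $\mathfrak{F}$ by the inductive hypothesis. From $G=(NQ)H_q$ the superradicality of $\mathfrak{F}$ forces $G\in\mathfrak{F}$, contradicting the choice of~$G$.

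The main obstacle is the remaining case in which $G/N\cong Q$ is a $q$-group, so $N$ is the Sylow $p$-subgroup, $H_q=N$, and $G=N\rtimes Q$; here the Hall-based factorization degenerates. The plan splits on whether $Q$ is cyclic. If $Q$ is cyclic, then $Q$ itself is primary cyclic and hence $\mathfrak{F}$-subnormal by hypothesis; both $N$ and $Q$ are nilpotent, and $N$ is $\mathfrak{F}$-subnormal via $G^{\mathfrak{F}}=N\le N$, so superradicality applied to $G=NQ$ puts $G$ in $\mathfrak{F}$. If $Q$ is non-cyclic, choose two distinct maximal subgroups $M_1\neq M_2$ of $Q$; both are normal in the $q$-group $Q$, so $M_1M_2=Q$, and the subgroups $NM_1$, $NM_2$ contain $N=G^{\mathfrak{F}}$, are proper in $G$, hence are $\mathfrak{F}$-subnormal $\mathfrak{F}$-subgroups by Lemma~\ref{lem_Fsn}(4) and induction; from $(NM_1)(NM_2)=NM_1M_2=NQ=G$ superradicality yields the final contradiction. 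The subtle point is precisely this $\{p,q\}$-configuration, where one must replace the single factor $Q$ by the product of two of its maximal subgroups to obtain a superradical decomposition.
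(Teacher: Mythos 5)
Your proof is correct, but it takes a genuinely different route from the paper's. The paper also reduces to a minimal counterexample $G$ whose proper subgroups all lie in $\mathfrak{F}$, but it then immediately invokes Semenchuk's result \cite[Lemma~3]{Sem96} to conclude that $G$ is a Schmidt group $P\rtimes\langle y\rangle$, and finishes via \cite[Theorem~1.5]{Mon_Sch}: either $G^{\mathfrak{F}}\le\Phi(G)$, which saturation rules out, or $P\le G^{\mathfrak{F}}$, which is incompatible with the $\mathfrak{F}$-subnormality of the primary cyclic subgroup $\langle y\rangle$, since the last term of a subnormalizing chain would be a maximal subgroup containing both $\langle y\rangle$ and $G^{\mathfrak{F}}\ge P$, hence all of $G$. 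You instead work with the unique minimal normal subgroup $N=G^{\mathfrak{F}}$ and, in every configuration, exhibit an explicit factorization $G=AB$ into proper $\mathfrak{F}$-subnormal $\mathfrak{F}$-subgroups ($NQ$ with a Hall $q'$-subgroup when $G/N$ is not a $q$-group; $N$ with a cyclic Sylow $Q$; or $NM_1$ with $NM_2$ for two distinct maximal subgroups of a non-cyclic $Q$), so that superradicality yields the contradiction directly; all three factorizations check out, including the key identity $(NM_1)(NM_2)=NM_1M_2=NQ=G$. What each approach buys: the paper's proof is shorter on the page but outsources the structure theory to two external results on minimal non-$\mathfrak{F}$-groups and on $\mathfrak{F}$-residuals of Schmidt groups; yours is self-contained modulo standard facts (Hall subgroups in soluble groups, $R_0$-closure of formations, Lemma~\ref{lem_Fsn}) and, notably, never uses saturation of $\mathfrak{F}$, so it in fact establishes the lemma under slightly weaker hypotheses.
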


\begin{proof}
If $G\in\mathfrak{F}$, then every proper subgroup
(including every primary cyclic subgroup) is $\mathfrak{F}$-subnormal in $G$.

Now, suppose that there are groups  that do not belong to $\mathfrak{F}$
but all their primary cyclic subgroup are $\mathfrak{F}$-subnormal.
Choice a group  $G$ of least order among them.
Hence every proper subgroup of $G$ belongs to $\mathfrak{F}$.
According to \cite[Lemma~3]{Sem96}, $G$ is a Schmidt group,
and  $G=P\rtimes \langle y\rangle$~\cite[Theorem~1.1]{Mon_Sch}.
In view of~\cite[Theorem~1.5]{Mon_Sch},
either $G^\mathfrak{F}\leq\Phi (G)$ or $P\leq G^\mathfrak{F}$.
If $G^\mathfrak{F}\leq\Phi (G)$,
then $G\in\mathfrak{F}$ as $\mathfrak{F}$ is a saturate formation,
a contradiction. Assume that $P\leq G^\mathfrak{F}$.
By the choice of $G$, $\langle y\rangle$ is
$\mathfrak{F}$\nobreakdash-\hspace{0pt}subnormal in $G$.
Therefore there is a maximal subgroup $M$ in $G$
that contains  $\langle y\rangle$ and $G^{\mathfrak{F}}$,
a contradiction. 
\end{proof}

\begin{theorem}\label{th_1}
If $\mathfrak{F}$ is a subgroup-closed saturate superradical formation
containing all nilpotent groups and $G\notin\mathfrak{F}$ is a soluble group,
then the following statements are equivalent.

$(1)$~Every primary cyclic subgroup of $G$ is
$\mathfrak{F}$\nobreakdash-\hspace{0pt}subnormal or self-normalizing.

$(2)$~Every non-abnormal  subgroup of $G$ is
$\mathfrak{F}$\nobreakdash-\hspace{0pt}subnormal and belongs to $\mathfrak{F}$.

$(3)$~$G=G^{\prime}\rtimes \langle x\rangle$, where
$\langle x\rangle$ is a Sylow $p$\nobreakdash-\hspace{0pt}subgroup
for a prime $p\in\pi(G)$ and a Carter subgroup, $G'=G^\mathfrak{N}$,
$G'\rtimes \langle x^p\rangle\in \mathfrak{F}$.
\end{theorem}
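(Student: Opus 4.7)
I would prove the cycle of implications $(2)\Rightarrow(1)\Rightarrow(3)\Rightarrow(2)$. The implication $(2)\Rightarrow(1)$ is immediate: every primary cyclic subgroup is either abnormal, and hence self-normalizing by the very definition of abnormality, or (by (2)) $\mathfrak{F}$-subnormal.

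For $(1)\Rightarrow(3)$ the first observation is that a proper $p$-subgroup of a finite group cannot be self-normalizing (its normalizer inside any enclosing Sylow $p$-subgroup is strictly larger), so a self-normalizing cyclic primary subgroup must be a cyclic Sylow subgroup. By Lemma~\ref{lem_inF} and $G\notin\mathfrak{F}$, at least one primary cyclic subgroup fails to be $\mathfrak{F}$-subnormal; by (1) it is then self-normalizing, hence equal to a cyclic Sylow $p$-subgroup $\langle x\rangle$ for some $p\in\pi(G)$, which, being nilpotent self-normalizing, is a Carter subgroup of $G$. Conjugacy of Carter subgroups in the soluble group $G$ forces the prime $p$ to be unique (subgroups of distinct prime orders cannot be conjugate), so every cyclic $q$-subgroup with $q\neq p$ and every proper subgroup of $\langle x\rangle$ is $\mathfrak{F}$-subnormal in $G$.

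To get the semidirect product structure, note that $\langle x\rangle$ is cyclic abelian and self-normalizing, so $N_G(\langle x\rangle)=\langle x\rangle\subseteq C_G(\langle x\rangle)$; Burnside's normal $p$-complement theorem then yields a normal $K\triangleleft G$ with $G=K\rtimes\langle x\rangle$. A Carter subgroup covers the nilpotent residual, so $G=G^{\mathfrak{N}}\langle x\rangle$, whence $G/G^{\mathfrak{N}}$ is cyclic, forcing $G'\leq G^{\mathfrak{N}}$; with the automatic reverse inclusion this gives $G'=G^{\mathfrak{N}}$. Since $G/K\cong\langle x\rangle$ is abelian, $G'\leq K$, and comparing $|G/G'|$ (a divisor of $|\langle x\rangle|=p^k$) with $|G/K|=p^k$ forces $K=G'$. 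To prove $G'\langle x^p\rangle\in\mathfrak{F}$, I would apply Lemma~\ref{lem_inF} to the soluble subgroup $H:=G'\langle x^p\rangle$: for $q\neq p$, cyclic $q$-subgroups of $H$ lie in $G'$, hence in $G$, so they are $\mathfrak{F}$-subnormal in $G$ and in $H$ by Lemma~\ref{lem_Fsn}(5); for $p$, cyclic $p$-subgroups of $H$ lie in conjugates of $\langle x^p\rangle$, and since $\langle x^p\rangle\in\mathfrak{F}$ is $\mathfrak{F}$-subnormal in $G$, Lemma~\ref{lem_Fsn}(5),(6) again give $\mathfrak{F}$-subnormality in $H$.

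For $(3)\Rightarrow(2)$, I would use the classical theorem that in a soluble group a subgroup is abnormal iff it contains some $G$-conjugate of a Carter subgroup (the ``if'' direction follows from Lemma~\ref{lem_abn}(2) applied to Carter subgroups). Under (3) the Carter subgroup $\langle x\rangle$ is cyclic of order $p^k=|G|_p$, so a non-abnormal subgroup $H$ contains no element of order $p^k$ and its Sylow $p$-subgroup lies in some $\langle x^p\rangle^g$; thus $H\leq G'\langle x^p\rangle^g=G'\langle x^p\rangle$ (normal in $G$). By (3) the latter is in $\mathfrak{F}$, so subgroup-closedness gives $H\in\mathfrak{F}$; and since $G/(G'\langle x^p\rangle)$ is cyclic of order $p$ (hence in $\mathfrak{N}\subseteq\mathfrak{F}$), $G'\langle x^p\rangle$ is $\mathfrak{F}$-subnormal in $G$, so Lemma~\ref{lem_Fsn}(6) yields $H$ $\mathfrak{F}$-subnormal. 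The main technical hurdle is $(1)\Rightarrow(3)$, where Burnside's theorem, Carter subgroup theory, and a second application of Lemma~\ref{lem_inF} must be combined carefully to pin down the precise semidirect product decomposition and the membership $G'\langle x^p\rangle\in\mathfrak{F}$.
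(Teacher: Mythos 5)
Your proposal is correct and follows essentially the same route as the paper: the same cycle of implications, with Lemma~\ref{lem_inF} producing the self-normalizing cyclic Sylow/Carter subgroup $\langle x\rangle$, Burnside's normal $p$-complement theorem (the paper's citation of \cite[IV.2.6]{Hup}) giving the splitting, and Lemmas~\ref{lem_Fsn} and~\ref{lem_abn} handling $(3)\Rightarrow(2)$. The only (harmless) deviations are that you obtain $G'=G^{\mathfrak N}=K$ from the covering property of Carter subgroups plus an order count where the paper uses a Frattini argument, and you verify $G'\rtimes\langle x^p\rangle\in\mathfrak{F}$ by checking $\mathfrak{F}$-subnormality of its primary cyclic subgroups case by case where the paper invokes \cite[VI.12.2]{Hup} to rule out self-normalizing ones.
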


\begin{proof}
$(1)\Rightarrow (3)$:
Assume that every primary cyclic subgroup of a soluble group $G\notin\mathfrak{F}$
is $\mathfrak{F}$\nobreakdash-\hspace{0pt}subnormal or self-normalizing.
By Lemma~\ref{lem_inF}, for some $p\in \pi (G)$ there is
a cyclic $p$\nobreakdash-\hspace{0pt}subgroup $\langle x\rangle$
that is not $\mathfrak{F}$\nobreakdash-\hspace{0pt}subnormal in $G$.
By the choice of $G$,  $\langle x\rangle$ is self-normalizing.
It implies that $\langle x\rangle$ is a Sylow subgroup and
a Carter subgroup of $G$.
In view of~\cite[IV.2.6]{Hup}, there is a normal Hall
$p^{\prime}$\nobreakdash-\hspace{0pt}subgroup $G_{p^{\prime}}$ of $G$, and
$G=G_{p^{\prime}}\rtimes\langle x\rangle$.
Clearly, $G^{\mathfrak N}\le G^\prime \le G_{p^\prime}$.
Since $G/G^{\mathfrak N}$ is nilpotent and
$PG^{\mathfrak N}/G^{\mathfrak N}$ is
a Sylow subgroup of $G/G^{\mathfrak N}$,
we conclude that $PG^{\mathfrak N}$ is normal in~$G$.
In view of the Frattini Lemma,
\[G=N_G(P)(PG^{\mathfrak N})=PG^{\mathfrak N}=G_{p^\prime }\rtimes P.\]
So $G^{\mathfrak N}= G^\prime = G_{p^\prime }$ and
$G=G'\rtimes \langle x\rangle$.

By~\cite[VI.12.2]{Hup},
$G^{\prime}\rtimes \langle x^p\rangle$
has no self-normalizing primary cyclic subgroups.
Hence from the choice of $G$
it follows that every primary cyclic subgroup of
$G^{\prime}\rtimes \langle x^p\rangle$
is $\mathfrak{F}$-subnormal in~$G$, and
$G^{\prime}\rtimes \langle x^p\rangle\in\mathfrak{F}$
according to Lemma~\ref{lem_inF}.

$(3)\Rightarrow (2)$:
Assume that a soluble group $G\notin\mathfrak{F}$ satisfies Statement $(3)$.
Let $H$ be a non-abnormal subgroup of $G$.
Since $G'$ is normal in $G$ and $G$ is soluble,
it implies that $G'$ is $\mathfrak{F}$-subnormal in $G$ by~\cite[Lemma~1.11]{Mon2016}.
If $H\leq G'$, then $H\in\mathfrak{F}$ and $H$ is $\mathfrak{F}$-subnormal in $G$
by Lemma~\ref{lem_Fsn}. Suppose that $H$ is not contained in $G'$.
If $G=G'H$, then $H$ contains a Carter subgroup of $G$
that is a Sylow subgroup of $G$. Therefore $H$ is abnormal
according to Lemma~\ref{lem_abn}. This contradicts the choice of $H$.
Hence $G'H$ is a proper subgroup of $G$. According to the choice of $G$,
$G'H\in\mathfrak{F}$ and $H$ is $\mathfrak{F}$-subnormal in $G$
by Lemma~\ref{lem_Fsn}.

$(2)\Rightarrow (1)$:
It is evident according to self-normalizing of abnormal subgroups.
\end{proof}

\begin{corollary}
Let $\mathfrak{F}$ be a subgroup-closed saturate superradical formation
containing all nilpotent groups.
Assume that every primary cyclic subgroup of
a soluble group $G\notin\mathfrak{F}$
is $\mathfrak{F}$-subnormal or self-normalizing,
$K$ is a Carter subgroup of~$G$
and $A$ is a proper  subgroup of~$G$.
The following statements hold.

$(1)$~If $|K|$ divides $|A|$, then $A$ is abnormal.

$(2)$~If $|K|$ does not divide $|A|$, then $A$ is $\mathfrak{F}$-subnormal
and $A\in\mathfrak{F}$.
\end{corollary}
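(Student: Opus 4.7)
The plan is to derive the corollary directly from Theorem~\ref{th_1}. By that theorem, $G=G'\rtimes \langle x\rangle$ with $\langle x\rangle$ simultaneously a Sylow $p$-subgroup of $G$ and a Carter subgroup, so $G/G'\cong \langle x\rangle$ is a cyclic $p$-group. Since Carter subgroups of a soluble group are conjugate, the given Carter subgroup $K$ is conjugate to $\langle x\rangle$, and in particular $|K|$ equals the $p$-part of $|G|$.

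For~(1), assume $|K|$ divides $|A|$. Then the $p$-part of $|A|$ coincides with the $p$-part of $|G|$, so a Sylow $p$-subgroup $P$ of $A$ is also a Sylow $p$-subgroup of $G$; hence $P=\langle x\rangle^g$ for some $g\in G$. Because $\langle x\rangle$ is self-normalizing and Sylow, Lemma~\ref{lem_abn}(1) yields that $\langle x\rangle=N_G(\langle x\rangle)$ is abnormal in $G$. Abnormality is preserved under conjugation, so $P$ is abnormal in $G$, and Lemma~\ref{lem_abn}(2) then shows that $A$ is abnormal in $G$.

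For~(2), the strategy is to prove that $A$ is non-abnormal and then invoke statement~(2) of Theorem~\ref{th_1}. Suppose, toward a contradiction, that $A$ is abnormal. By Lemma~\ref{lem_abn}(3), $AG'/G'$ is abnormal in $G/G'$. But $G/G'$ is a $p$-group and therefore nilpotent, and in a nilpotent group every subgroup is subnormal, so the only abnormal subgroup of $G/G'$ is $G/G'$ itself. Hence $AG'=G$, forcing $|A/(A\cap G')|=|G/G'|=|K|$, and so $|K|$ divides $|A|$, contrary to hypothesis. Thus $A$ is non-abnormal, and Theorem~\ref{th_1} guarantees that $A$ is $\mathfrak{F}$\nobreakdash-\hspace{0pt}subnormal and belongs to $\mathfrak{F}$.

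The argument is essentially a consequence of the structural description in Theorem~\ref{th_1}; the main technical observation, used in~(2), is that the cyclic $p$-quotient $G/G'$ admits no proper abnormal subgroup, which is exactly what converts the hypothesis of abnormality of $A$ into the numerical condition $|K|\mid |A|$.
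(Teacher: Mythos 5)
Your proof is correct and is essentially the intended argument: the paper states this corollary without proof as an immediate consequence of Theorem~\ref{th_1}, and your derivation (conjugacy of Carter subgroups, Lemma~\ref{lem_abn} for part~(1), and the observation that the nilpotent quotient $G/G'$ has no proper abnormal subgroups to reduce part~(2) to Theorem~\ref{th_1}(2)) fills that gap exactly as one would expect. No issues.
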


Note that if $\mathfrak{F}$ is a subgroup-closed formation
containing all nilpotent subgroups, then in view of Lemma~\ref{lem_Fabn}
every $\mathfrak{F}$\nobreakdash-\hspace{0pt}abnormal subgroup is self-normalizing.
Hence Theorem~\ref{th_1} enables to describe the structure of
an $E_\mathfrak{F}$-group when $\mathfrak{F}$ is
a subgroup-closed saturate superradical formation
containing all nilpotent groups.

\begin{corollary}
If $\mathfrak{F}$ is a subgroup-closed saturate superradical formation
containing all nilpotent groups and $G\notin\mathfrak{F}$ is a soluble group,
then the following statements are equivalent.

$(1)$~Every primary cyclic subgroup of $G$ is
$\mathfrak{F}$\nobreakdash-\hspace{0pt}subnormal or $\mathfrak{F}$\nobreakdash-\hspace{0pt}abnormal.

$(2)$~$G$ is an $E_\mathfrak{F}$-group.

$(3)$~$G=G^{\prime}\rtimes \langle x\rangle$, where
$\langle x\rangle$ is a Sylow $p$\nobreakdash-\hspace{0pt}subgroup
for a prime $p\in\pi(G)$ and a Carter subgroup,
$G^{\prime}=G^{\mathfrak{F}}$ and
$G^{\prime}\rtimes \langle x^p\rangle\in \mathfrak{F}$.
\end{corollary}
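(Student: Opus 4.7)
The plan is to establish the cycle $(2)\Rightarrow(1)\Rightarrow(3)\Rightarrow(2)$, each step reducing to Theorem~\ref{th_1} via the observation (made just before the corollary) that every $\mathfrak{F}$-abnormal subgroup is self-normalizing, since $\mathfrak{N}\subseteq\mathfrak{F}$ and $\mathfrak{F}$ is subgroup-closed.

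The implication $(2)\Rightarrow(1)$ is immediate from the definition of an $E_\mathfrak{F}$-group. For $(1)\Rightarrow(3)$, the observation shows that $(1)$ implies condition $(1)$ of Theorem~\ref{th_1}, so Theorem~\ref{th_1} yields $G=G'\rtimes\langle x\rangle$ with $\langle x\rangle$ a cyclic Sylow $p$-Carter, $G'=G^{\mathfrak{N}}$, and $G'\rtimes\langle x^p\rangle\in\mathfrak{F}$. To upgrade $G^{\mathfrak{N}}$ to $G^{\mathfrak{F}}$: since $\mathfrak{N}\subseteq\mathfrak{F}$ one has $G^{\mathfrak{F}}\le G'$, and by Lemma~\ref{lem_inF} the subgroup $\langle x\rangle$ is not $\mathfrak{F}$-subnormal, so under $(1)$ it must be $\mathfrak{F}$-abnormal. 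If $G^{\mathfrak{F}}\lneq G'$, then $\langle x\rangle G^{\mathfrak{F}}\lneq G$, so one extends to a maximal $K\supseteq\langle x\rangle G^{\mathfrak{F}}$; its core $K_G$ contains $G^{\mathfrak{F}}$, making $G/K_G$ a quotient of $G/G^{\mathfrak{F}}\in\mathfrak{F}$, hence itself in $\mathfrak{F}$, contradicting the $\mathfrak{F}$-abnormality of $\langle x\rangle$. So $G^{\mathfrak{F}}=G'$.

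For $(3)\Rightarrow(2)$, condition $(3)$ of the corollary pinches $G^{\mathfrak{N}}=G'$ (via $G^{\mathfrak{F}}\le G^{\mathfrak{N}}\le G'$ and $G^{\mathfrak{F}}=G'$) and so implies condition $(3)$ of Theorem~\ref{th_1}; then Theorem~\ref{th_1} gives that every non-abnormal subgroup of $G$ is $\mathfrak{F}$-subnormal. It remains to show that every abnormal subgroup is $\mathfrak{F}$-abnormal. In our $G$, $\langle x\rangle$ is both a Carter subgroup and a system normalizer, so every abnormal subgroup contains a conjugate of $\langle x\rangle$; after conjugating we may assume $\langle x\rangle\le H$, and by Lemma~\ref{lem_Fabn}(1) it suffices to prove that $\langle x\rangle$ itself is $\mathfrak{F}$-abnormal in $G$.

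The main obstacle is this last step. Given $\langle x\rangle\le K\lessdot L\le G$ and $L/K_L\in\mathfrak{F}$, I aim for a contradiction. The case $L=G$ is direct: $G/K_G\in\mathfrak{F}$ forces $K_G\supseteq G^{\mathfrak{F}}=G'$ and hence $K\supseteq G'\langle x\rangle=G$, absurd. For $L\lneq G$ the plan is to exploit that $\langle x\rangle$ acts fixed-point-freely on $G'$ (since $N_G(\langle x\rangle)=\langle x\rangle$ forces $C_{G'}(\langle x\rangle)=1$), combined with the superradical hypothesis applied to the factorisation $G=L\cdot G'$ in which $G'\in\mathfrak{F}$ is normal, to force $L^{\mathfrak{F}}=L\cap G'$; then $L^{\mathfrak{F}}\langle x\rangle=L$, and together with $L^{\mathfrak{F}}\le K_L\le K$ and $\langle x\rangle\le K$ this yields $L\le K$, contradicting $K\lneq L$.
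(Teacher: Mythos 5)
The paper records no argument for this corollary at all: it is presented as an immediate consequence of Theorem~\ref{th_1} via the remark that $\mathfrak{F}$-abnormal subgroups are self-normalizing. You therefore attempt more than the author does, and most of it is sound. Your $(2)\Rightarrow(1)$ is trivial, and $(1)\Rightarrow(3)$ is complete and correct: Lemma~\ref{lem_inF} forces $\langle x\rangle$ to be non-$\mathfrak{F}$-subnormal, hence $\mathfrak{F}$-abnormal under $(1)$, and your maximal-subgroup argument upgrading $G'=G^{\mathfrak N}$ to $G'=G^{\mathfrak F}$ works. In $(3)\Rightarrow(2)$ the reductions are also fine (every abnormal subgroup contains a conjugate of $\langle x\rangle$ --- though the elementary route is $HG'=G$ plus Sylow, rather than system normalizers --- and Lemma~\ref{lem_Fabn}\,(1) reduces everything to the $\mathfrak{F}$-abnormality of $\langle x\rangle$), and the case $L=G$ is handled correctly.

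The genuine gap is the case $\langle x\rangle\le K\lessdot L<G$, which you yourself flag as ``the main obstacle'' and for which you give only a plan, not a proof. That plan cannot succeed as stated, because the ingredients you list --- $C_{G'}(\langle x\rangle)=1$, the factorisation $G=LG'$ with $G'$ normal and in $\mathfrak{F}$, and superradicality --- do not suffice to force $L^{\mathfrak F}=L\cap G'$. They yield only $L^{\mathfrak N}=L\cap G'$ (coprime fixed-point-free action gives $[L\cap G',x]=L\cap G'$), while superradicality applied to $G=LG'$ gives nothing, since $L$ contains the abnormal subgroup $\langle x\rangle$ and so is abnormal rather than $\mathfrak{F}$-subnormal. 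Concretely, let $\mathfrak{F}$ be the lattice formation attached to the partition of the primes into $\{2,3\}$ and its complement (superradical by the paper's preliminaries), and let $G=(E_{2^2}\times E_{5^2})\rtimes\langle x\rangle$ with $x$ of order $3$ acting fixed-point-freely on both factors. Every hypothesis in your sketch holds: $G'=E_{2^2}\times E_{5^2}$ is normal, lies in $\mathfrak{F}$, and is acted on fixed-point-freely, and $G=LG'$ for $L=E_{2^2}\rtimes\langle x\rangle\simeq A_4$; yet $L\in\mathfrak{F}$, so $L^{\mathfrak F}=1\neq L\cap G'$ and $\langle x\rangle$ is not $\mathfrak{F}$-abnormal. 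This group escapes the corollary only because $G^{\mathfrak F}=E_{5^2}\neq G'$, so any correct proof of your missing step must use the hypothesis $G^{\mathfrak F}=G'$ (and plausibly $G'\rtimes\langle x^p\rangle\in\mathfrak{F}$) in an essential way --- and your plan never invokes it there. Until $L^{\mathfrak F}=L\cap G'$ is actually derived from those hypotheses, the implication $(3)\Rightarrow(2)$ remains unproved.
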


\begin{example}
Let $\mathfrak{F}$ be the formation of all groups
with nilpotent derived subgroups.
We use $E_{p^n}$ to denote an elementary abelian group of order $p^n$
for a prime $p$ and a positive integer $n$,
$C_m$ is a cyclic group of order $m$ for a positive integer $m$.

Consider a group $G$~\cite[{SmallGroup~ID~(864,4670)}]{gap})
\[
G=(S_3\times S_3\times A_4)\rtimes C_2.
\]
In $G$, a Sylow $3$\nobreakdash-\hspace{0pt}subgroup
${G_3 \simeq E_{3^3}}$ is $\mathfrak{F}$\nobreakdash-\hspace{0pt}subnormal,
a Sylow $2$\nobreakdash-\hspace{0pt}subgroup
$G_2 \simeq E_{2^4}\rtimes C_2$ is self-normalizing, but
$G_2$ is not $\mathfrak{F}$\nobreakdash-\hspace{0pt}subnormal and
is not $\mathfrak{F}$\nobreakdash-\hspace{0pt}abnormal.
Every proper subgroup of $G_2$ is
$\mathfrak{F}$\nobreakdash-\hspace{0pt}subnormal in $G$.
In addition,
\[G^{\mathfrak{F}}=F(G)\simeq E_{3^2}\times E_{2^2}<
G^{\mathfrak{N}}\simeq E_{3^2}\times A_4<
G^{\prime}\simeq (E_{3^2}\times A_4)\rtimes C_2.
\]
Thus, $G$ belongs to the class of groups with
$\mathfrak{F}$\nobreakdash-\hspace{0pt}subnormal or
self-normalizing primary subgroups,
and $G$ does not belong to the class of groups with
$\mathfrak{F}$\nobreakdash-\hspace{0pt}subnormal or
$\mathfrak{F}$\nobreakdash-\hspace{0pt}abnormal primary subgroups.
\end{example}


\section{Groups with absolutely $\mathfrak{F}$-subnormal
or self-normalizing primary subgroups}

A.\,F.\,Vasil'ev proper~\cite{VasPFMT2019} the following concept.

Let $\mathfrak{F}$ be a formation.
A subgroup $H$ of a group $G$ is called absolutely $\mathfrak{F}$-subnormal
in $G$ if any subgroup $L$ containing $H$ is $\mathfrak{F}$-subnormal
in $G$.

In view of {\cite[Corrolary~3.2]{VasPFMT2019}},
the following lemma hold.

\begin{lemma}\label{lem_absF_Cycl}
Let $\mathfrak{F}$ be a subgroup-closed saturate formation
containing all nilpotent groups.
A group $G$ belongs to $\mathfrak{F}$ if and only if
every primary cyclic subgroup of $G$ is
absolutely $\mathfrak{F}$-subnormal in $G$.
\end{lemma}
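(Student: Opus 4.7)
The forward direction is essentially formal: if $G\in\mathfrak{F}$, then subgroup-closedness gives $L\in\mathfrak{F}$ for every $L\le G$, and for any maximal chain $L=H_0\lessdot H_1\lessdot\cdots\lessdot H_n=G$ each factor $H_i/(H_{i-1})_{H_i}$ is a quotient of the $\mathfrak{F}$-group $H_i$, hence lies in $\mathfrak{F}$. Therefore every subgroup of $G$ is $\mathfrak{F}$-subnormal, so in particular every primary cyclic subgroup is absolutely $\mathfrak{F}$-subnormal.

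For the converse, the plan is to reduce matters to Lemma~\ref{lem_Fsn_max}: it suffices to verify that every maximal subgroup of $G$ is $\mathfrak{F}$-subnormal in $G$. Let $M$ be a maximal subgroup of $G$. If $M=1$, then $|G|$ is prime, so $G$ is cyclic and lies in $\mathfrak{N}\subseteq\mathfrak{F}$. Otherwise $M$ contains a nontrivial element $x$ of prime-power order (any nontrivial finite group does), and $\langle x\rangle$ is then a primary cyclic subgroup of $G$ with $\langle x\rangle\le M$. By hypothesis $\langle x\rangle$ is absolutely $\mathfrak{F}$-subnormal in $G$, which by definition means that every overgroup of $\langle x\rangle$ in $G$ — in particular $M$ itself — is $\mathfrak{F}$-subnormal in $G$. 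Applying Lemma~\ref{lem_Fsn_max} then yields $G\in\mathfrak{F}$.

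I do not anticipate a real obstacle here: the point of the \emph{absolutely} in the hypothesis is precisely to let one transfer $\mathfrak{F}$-subnormality from a single primary cyclic witness up to any subgroup that contains it. This is what bypasses the Schmidt-group detour and the solubility assumption needed in the analogue for plain $\mathfrak{F}$-subnormality (Lemma~\ref{lem_inF}), and the argument above applies verbatim whether or not $G$ is soluble.
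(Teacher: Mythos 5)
Your proof is correct. Note, however, that the paper does not actually prove this lemma: it simply quotes it as a consequence of \cite[Corollary~3.2]{VasPFMT2019}, so there is no internal argument to compare against. Your argument is a clean, self-contained substitute. The forward direction is fine: for $G\in\mathfrak{F}$ subgroup-closedness puts every $H_i$ of a maximal chain in $\mathfrak{F}$, and formations are quotient-closed, so every subgroup of $G$ is $\mathfrak{F}$-subnormal and hence every primary cyclic subgroup is absolutely $\mathfrak{F}$-subnormal. The converse correctly exploits the word ``absolutely'': any nontrivial maximal subgroup $M$ contains, by Cauchy, a subgroup of prime order, and absolute $\mathfrak{F}$-subnormality of that subgroup transfers $\mathfrak{F}$-subnormality to $M$; the degenerate case $M=1$ forces $|G|$ prime and $G\in\mathfrak{N}\subseteq\mathfrak{F}$. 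Lemma~\ref{lem_Fsn_max} then applies, since its hypotheses (subgroup-closed and saturated) are exactly those assumed here. What your route buys is independence from the external reference and, as you note, freedom from any solubility or superradicality assumption, in contrast with the Schmidt-group argument used for Lemma~\ref{lem_inF}.
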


\begin{theorem}\label{th_2}
Let $\mathfrak{F}$ be a subgroup-closed saturate formation
containing all nilpotent groups.
Every primary cyclic subgroup of a group $G\notin\mathfrak{F}$
is absolutely $\mathfrak{F}$-subnormal or self-normalizing
if and only if $G$ is a non-nilpotent group
all proper subgroups of which is primary,
in particular, $G=G'\rtimes \langle x\rangle$,
$G'$ is an elementary abelian $p$-group for a prime $p\in\pi(G)$,
$\langle x\rangle$ is a maximal subgroup of order $q$
and a Carter subgroup of $G$ for a prime $q\in\pi(G)$, $q\neq p$.
\end{theorem}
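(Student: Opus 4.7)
For sufficiency ($\Leftarrow$), assume $G=G'\rtimes\langle x\rangle$ is non-nilpotent with all proper subgroups primary, $G'$ an elementary abelian $p$-group, and $\langle x\rangle$ a Carter subgroup of prime order $q\neq p$. Every primary cyclic subgroup of $G$ is either a conjugate of $\langle x\rangle$ --- self-normalizing because it is a Carter subgroup of prime order --- or a cyclic $p$-subgroup $\langle y\rangle\le G'$. The all-primary hypothesis forces $\langle x\rangle$ to act without fixed points on $G'\setminus\{1\}$, because otherwise a centralized $p$-element $z$ would produce $\langle z\rangle\times\langle x\rangle$, a proper subgroup with two prime divisors. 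Consequently any proper subgroup of $G$ containing a $p$-element $y\in G'$ must itself be a $p$-group and therefore lie in the unique Sylow $p$-subgroup $G'$. Any $L$ with $\langle y\rangle\le L\le G$ is then either $G$ (trivially $\mathfrak{F}$-subnormal) or a subgroup of the normal $p$-group $G'$, hence subnormal in $G$ and, since $\mathfrak{N}\subseteq\mathfrak{F}$, $\mathfrak{F}$-subnormal. So $\langle y\rangle$ is absolutely $\mathfrak{F}$-subnormal.

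For necessity ($\Rightarrow$), $\mathfrak{N}\subseteq\mathfrak{F}$ and $G\notin\mathfrak{F}$ make $G$ non-nilpotent, and Lemma~\ref{lem_absF_Cycl} provides a self-normalizing primary cyclic subgroup $\langle x\rangle$ of prime power order $q^n$. As in the soluble case, $\langle x\rangle$ must be a cyclic Sylow $q$-subgroup (otherwise it is properly contained in a larger $q$-subgroup that normalizes it) and hence a Carter subgroup. Since Carter subgroups are pairwise conjugate where they exist, no cyclic $r$-subgroup for $r\neq q$ is self-normalizing (it would give a non-conjugate Carter), and no proper cyclic subgroup of the abelian $\langle x\rangle$ is self-normalizing either. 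By hypothesis every primary cyclic subgroup outside the $G$-conjugacy class of $\langle x\rangle$ is absolutely $\mathfrak{F}$-subnormal in $G$, so every subgroup of $G$ containing at least one such subgroup is itself $\mathfrak{F}$-subnormal. In particular every proper subgroup $H$ with $|\pi(H)|\ge 2$, every proper $q$-subgroup, and every subgroup of order divisible by some prime $r\neq q$, is $\mathfrak{F}$-subnormal in $G$.

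By Lemma~\ref{lem_Fsn_max} and $G\notin\mathfrak{F}$, some maximal subgroup $M$ fails $\mathfrak{F}$-subnormality, so $M$ must be a conjugate of $\langle x\rangle$; hence $\langle x\rangle$ is simultaneously maximal, Sylow $q$, and Carter. I would then show by induction on $|G|$ that every proper subgroup $H$ of $G$ inherits the primary-cyclic hypothesis (absolutely $\mathfrak{F}$-subnormal subgroups descend by Lemma~\ref{lem_Fsn}(5), and self-normalizing Sylow subgroups remain self-normalizing in any subgroup containing them); this forces every proper $H$ with $|\pi(H)|\ge 2$ into $\mathfrak{F}$ and makes $G$ a minimal non-nilpotent (Schmidt) group. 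Applying \cite[Theorem~1.5]{Mon_Sch} identifies $G^{\mathfrak{N}}=G'$ as the normal Sylow $p$-subgroup, and the tight primary-subgroup constraint rules out both $|x|>q$ and $\Phi(G')\neq 1$, since each would produce a proper subgroup of $G$ with two prime divisors (either $\Phi(G')\cdot\langle x\rangle$ or $C_{G'}(x^q)\cdot\langle x^q\rangle$) in violation of the hypothesis. The main obstacle is this final synthesis: extracting $|x|=q$ and the elementary-abelian structure of $G'$ from the abstract absolute-$\mathfrak{F}$-subnormality framework by chaining the induction, the $\mathfrak{F}$-subnormality transfer lemmas, and the Schmidt-group classification.
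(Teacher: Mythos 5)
Your sufficiency direction is essentially the paper's argument and is fine: under the all-proper-subgroups-primary hypothesis a proper subgroup containing a nontrivial $p$-element is a $p$-group inside the normal Sylow $p$-subgroup $G'$, hence subnormal and therefore $\mathfrak{F}$-subnormal, while the conjugates of $\langle x\rangle$ are self-normalizing Carter subgroups. The necessity direction, however, has genuine gaps. The decisive step you never actually carry out is $|x|=q$. The paper gets it in one stroke: if $|Q|=q^{n}$ with $n>1$, the maximal subgroup $A$ of the cyclic group $Q$ is normalized by $Q$, hence not self-normalizing, hence absolutely $\mathfrak{F}$-subnormal; since every overgroup of $Q$ contains $A$, this makes $Q$ itself absolutely $\mathfrak{F}$-subnormal, contradicting its choice. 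You record the ingredient (``no proper cyclic subgroup of $\langle x\rangle$ is self-normalizing'') but never close this loop; instead you defer $|x|=q$ to a final argument claiming that $C_{G'}(x^{q})\cdot\langle x^{q}\rangle$ would be ``a proper subgroup with two prime divisors in violation of the hypothesis.'' That is circular: the hypothesis only constrains primary \emph{cyclic} subgroups, and says nothing against proper subgroups with two prime divisors --- that all proper subgroups are primary is part of the \emph{conclusion}. The same objection applies to your treatment of $\Phi(G')\neq 1$ (the correct contradiction there is with the maximality of $\langle x\rangle$, not with the hypothesis).

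Two further gaps: you never establish solubility of $G$, which the paper obtains from $|Q|=q$ together with \cite[IV.2.8]{Hup} and which is needed to conclude that the minimal normal subgroup $G'$ is elementary abelian; and the ``induction on $|G|$'' by which you would make $G$ a Schmidt group is only announced, and its stated output is a non sequitur --- forcing proper subgroups with two prime divisors into $\mathfrak{F}$ does not make them nilpotent, since $\mathfrak{F}$ may be much larger than $\mathfrak{N}$. The paper avoids the Schmidt-group detour entirely: after $|Q|=q$ and solubility, it shows $Q$ is maximal (every non-$\mathfrak{F}$-subnormal maximal subgroup must avoid absolutely $\mathfrak{F}$-subnormal primary cyclic subgroups, and Lemma~\ref{lem_Fsn_max} guarantees such a maximal subgroup exists), whence $G'$ is minimal normal and elementary abelian. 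Your reduction of the offending maximal subgroup $M$ to a conjugate of $\langle x\rangle$ is asserted rather than proved, though if you spelled it out (every element of prime order in $M$ generates a self-normalizing, hence Carter, hence $Q$-conjugate subgroup) it would in fact hand you $|x|=q$ for free --- which your later text shows you did not notice.
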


\begin{proof} 
Assume that every primary cyclic subgroup
of a group $G\notin\mathfrak{F}$ is absolutely $\mathfrak{F}$-subnormal
or self-normalizing. Since $\mathfrak{N}\subseteq\mathfrak{F}$,
clearly $G\notin\mathfrak{N}$.
If every primary cyclic subgroup of $G$ is
absolutely $\mathfrak{F}$-subnormal in $G$,
then in view of Lemma~\ref{lem_absF_Cycl}
$G\in\mathfrak{F}$, a contradiction.
Consequently, for a prime $q\in\pi(G)$
there is a cyclic $q$-subgroup $Q=\langle x\rangle$
that is not absolutely $\mathfrak{F}$-subnormal in $G$.
By the choice of $G$, $Q$ is self-normalizing,
and so $Q$ is a Sylow subgroup and a Carter subgroup of $G$.
According to~\cite[IV.2.6]{Hup}, there is a Hall $q'$-subgroup $G_{q'}$
such that $G=G_{q'}\rtimes Q$. Clearly, $G^\mathfrak{F}\leq G'\leq G$.
Since $G/G^{\mathfrak N}$ is nilpotent and
$QG^{\mathfrak N}/G^{\mathfrak N}$ is a Sylow subgroup
of $G/G^{\mathfrak N}$, we obtain that $QG^{\mathfrak N}$ is normal in~$G$.
By the Frattini Lemma,
\[
G=N_G(Q)(QG^{\mathfrak N})=QG^{\mathfrak N}=G_{q\prime }\rtimes Q.
\]
Consequently, $G^{\mathfrak N}= G_{p\prime}=G'$.

Let $A$ be a maximal subgroup in $Q$.
By the choice of $G$, $A$ is absolutely $\mathfrak{F}$-subnormal
or self-normalizing in $G$.
If $A$ is self-normalizing, then $A$ is a Carter subgroup,
and $A$ is conjugate with $Q$~\cite{Vd}, a contradiction.
Hence $A$ is absolutely $\mathfrak{F}$-subnormal in $G$, and
$Q$ is absolutely $\mathfrak{F}$-subnormal in $G$,
a contradiction. Therefore $|Q|=q$ and $G$ is soluble in view of~\cite[IV.2.8]{Hup}.

Suppose that $Q$ is not a maximal subgroup of $G$.
Then every maximal subgroup $M$ of $G$
contains a primary cyclic $r$-subgroup $R$, $r\neq q$.
If $R$ is self-normalizing, then $R$ is a Carter subgroup of $G$
and $R$ is conjugate with $Q$ \cite[VI.12.2]{Hup},
a contradiction with $r\neq q$. Therefore $R$ is
absolutely $\mathfrak{F}$\nobreakdash-\hspace{0pt}subnormal in $G$,
and $M$ is $\mathfrak{F}$-subnormal in $G$.
Thus every maximal subgroup of $G$ is $\mathfrak{F}$-subnormal,
and $G\in\mathfrak{F}$ by Lemma~\ref{lem_Fsn_max}, a contradiction.
It implies $Q$ is a maximal subgroup,
and $G'$ is a minimal normal subgroup of $G$.
Since $G$ is soluble, we have $G'$ is an elementary abelian $p$-group
for a prime $p$, $p\neq q$.

Conversely, assume that $G=G'\rtimes \langle x\rangle$,
$G'$ is an elementary abelian $p$-group for a prime $p\in\pi(G)$,
$\langle x\rangle$ is a maximal subgroup of order $q$
and a Carter subgroup of $G$ for a prime $q\in\pi(G)$, $q\neq p$.
Let $A$ be a primary cyclic subgroup of $G$.
If $\pi(A)=q$, then $A$ is a Carter subgroup and~$A=N_G(A)$.
If $\pi(A)=p$, then $A\le G^\prime\in\mathfrak{A}$.
Suppose that $H$ is a proper subgroup of $G$ such that $A\leq H$.
Hence $H$ is subnormal in $G$, and $H$ is $\mathfrak{F}$-subnormal in $G$
in view of ~\cite[Lemma~1.11]{Mon2016}.
So $A$ is absolutely $\mathfrak{F}$-subnormal in $G$.
\end{proof}

\end{document}